\newcommand{\QQ}{\mathbf{Q}}
\newcommand{\ZZ}{\mathbf{Z}}
\newcommand{\CC}{\mathbf{C}}
\renewcommand{\aa}{\mathfrak{a}}
\newcommand{\bb}{\mathfrak{b}}
\newcommand{\cc}{\mathfrak{C}}
\newcommand{\hh}{\mathfrak{h}}
\newcommand{\FF}{\mathcal{F}}
\newcommand{\im}{\mathrm{Im}}
\newcommand{\ol}[1]{\overline{#1}}
\newcommand{\psl}{\mathrm{PSL}_{2}(\ZZ)}
\newcommand{\tr}{\mathrm{tr}}
\newtheorem{thm}{Theorem}[section]
\newtheorem{theorem}[thm]{Theorem}
\newtheorem{lemma}[thm]{Lemma}
\newtheorem{corollary}[thm]{Corollary}
\newtheorem{proposition}[thm]{Proposition}
\theoremstyle{definition}
\newtheorem{definition}[thm]{Definition}
\newtheorem{example}[thm]{Example}
\numberwithin{equation}{section}
\title{M\MakeLowercase{ultivariate} L\MakeLowercase{ucas} p\MakeLowercase{olynomials} \MakeLowercase{and} i\MakeLowercase{deal} c\MakeLowercase{lasses} \MakeLowercase{in} q\MakeLowercase{uadratic} n\MakeLowercase{umber} f\MakeLowercase{ields}}
\author{Ayberk Zeytin}
\keywords{Pauli matrices, Fibonacci polynomials, Lucas polynomials, \c{c}arks, \c{c}ark hypersurfaces, indefinite binary quadratic forms, class number problems of Gau{\ss}, real quadratic number fields, narrow ideal classes}
\subjclass[2010]{11R29,11B39}
\begin{document}

\begin{abstract}
	In this work, by using Pauli matrices, we introduce four families of polynomials indexed over the positive integers. These polynomials have rational or imaginary rational coefficients. It turns out that two of these families are closely related to classical Lucas and Fibonacci polynomial sequences and hence to Lucas and Fibonacci numbers. We use one of these families to give a geometric interpretation of the 200 years old class number problems of Gau{\ss}, which is equivalent to the study of narrow ideal classes in real quadratic number fields.
\end{abstract}

\maketitle

\section{Introduction}

A number field, $K$, is a finite extension of $\QQ$. Elements of $K$ which are roots of monic polynomials with integral coefficients form a subring of $K$ called the \emph{ring of integers} of $K$ and denoted by $\ZZ_{K}$. Being an extension of $\ZZ$, $\ZZ_{K}$ shares many properties with $\ZZ$. Yet, determining for which $K$, $\ZZ_{K}$ is a unique factorization domain (which is in this case equivalent to being a principal ideal domain) is one of the most fundamental open questions of algebraic number theory. A measure for this property is the class number of $K$, denoted $h_{K}$, that is the order of the ideal class group $H(K)$, which is the multiplicative group of ideals of $\ZZ_{K}$ modulo the subgroup of principal ideals. The class number $h_{K}=1$ if and only if $\ZZ_{K}$ is a unique factorization domain. 

As $\ZZ_{K}$ is a Dedekind domain, every fractional ideal of $K$, can be generated by at most two elements. Hence one has a map from projectivized ordered pairs of elements of $\ZZ_{K}$, which is a group under ideal multiplication denoted by $H^{+}(K)$, to $H_{K}$. This map is bijective exactly when $\ZZ_{K}$ admits a unit of norm $-1$. Else, this map becomes a 2-to-1 map and the group $H^{+}(K)$ is called the \emph{narrow class group}. Analogously, the order of $H^{+}(K)$, denoted $h^{+}(K)$ is called the \emph{narrow class number}. 

Let us now restrict the extension degree to $2$, i.e. consider the quadratic case. Any such number field $K$ is equal to $\QQ(\sqrt{d})$ for some square-free integer $d$. In this case the both narrow class group and the class group is computed using the corresponding binary quadratic forms of Gau{\ss}, \cite{disquisitiones}. Whenever $d>0$, $K$ is called real quadratic and whenever $d<0$, $K$ is called imaginary. In fact, for the imaginary quadratic case Gau{\ss} has determined ``almost'' all such number fields with class number one. It turns out that $\ZZ_{K}$ is a principal ideal domain if and only if $d \in \{-3,-4,-7,-8,-11,-19,-43,-67,-163\}$. However, the question of determining real quadratic number fields of class number one is still open and is referred to as class number one problem of Gau{\ss}. It must be noted that the number of such number fields is expected to be infinite.  

In this paper, we introduce four families of multivariate polynomials named as $A_{k},B_{k},C_{k}$ and $D_{k}$. This work is focused to study the family $A_{k}$, though we point out properties of the remaining three polynomial families, too. For instance, we will see that the polynomial family $B_{k}$ is closely related to Fibonacci polynomials. The family $A_{k}$ will be called multivariate Lucas polynomials. Naming stems from the fact that if one reduces these polynomials to one variable, then the classical Lucas polynomials are obtained. A similar phenomenon occurs for the family $B_{k}$, that is they restrict to classical Fibonacci polynomials. Our main motivation for introducing such a family of polynomials is that given any real quadratic number field $K$ we use multivariate Lucas polynomials to define an affine surface, called \emph{\c{c}ark surface} of $K$, whose integral points are in one-to-one correspondence with narrow ideal classes in $K$. This allows us to access the more than 200 years old class number problems of Gau{\ss} from a completely different point of view. Indeed, \c{c}ark surfaces produce high degree projective surfaces which are conjecturally Kobayashi hyperbolic. By a conjecture of Lang they have finitely many $\QQ$-rational points. Reader is suggested to consult \cite{cark/and/lang} and references therein for further details on this point of view.

The paper is organized as follows: The next section is devoted to defining and establishing basic properties of the aforementioned polynomial sequences. In particular, multivariate Lucas and Fibonacci polynomials are defined. In the last section after a quick review of narrow ideal classes and the narrow ideal class group of a quadratic number field, we define the automorphism group of a narrow ideal class. In the real quadratic case, this group is isomorphic to $\ZZ$ generated by a hyperbolic element of $\psl$. Using this we attach an infinite bipartite ribbon graph, called \c{c}ark in \cite{UZD}, to a narrow ideal class and show how they give rise to integral points of an appropriate affine surface.

\section{Multivariable Lucas and Fibonacci polynomials}

In this section, we will introduce four families of polynomials, $A_{k},B_{k},C_{k}$ and $D_{k}$, indexed over positive integers. These polynomials have  either rational or imaginary rational coefficients. The second part is devoted to listing certain properties which will be required in upcoming sections.

\subsection{The families $A_{k}, B_{k},C_{k}$ and $D_{k}$.}
Pauli matrices, which have proven themselves to be useful tools in the context of quantum mechanics, are defined as
$$ 	\sigma_{1} = \begin{pmatrix}
                	0 & 1 \\ 1 & 0
                \end{pmatrix}, \quad 
		\sigma_{2} = \begin{pmatrix}
		             	0 & -\sqrt{-1} \\ \sqrt{-1} & 0
		             \end{pmatrix}, \quad
		\sigma_{3} = \begin{pmatrix}
		             	1 & 0 \\ 0 & -1
		             \end{pmatrix}.
$$

\noindent They are of order two and satisfy $\sigma_{1}\sigma_{2}\sigma_{3} = \sqrt{-1}$. They are traceless and their determinant is $-1$, hence their eigenvalues are $\pm1$.

Together with the identity matrix Pauli matrices form a basis for the vector space of matrices of size $2\times2$ with complex entries. In particular, for the matrix 
$M(x,y) = \begin{pmatrix}
	1+xy & x \\ y & 1
\end{pmatrix}
$
with $x$ and $y$ being complex variables, the coefficients of $I$, $\sigma_{1}$, $\sigma_{2}$ and $\sigma_{3}$ become $A = \frac{1}{2} (2 + xy)$, $B = \frac{1}{2}(x+y)$, $C = \frac{\sqrt{-1}}{2} (x-y)$ and $D = \frac{xy}{2}$, respectively. More generally, for a sequence of even number of complex numbers $x_{1}, y_{1}, \ldots, x_{k},y_{k}$ we define 
$$M(x_{1},y_{1},\ldots,x_{k},y_{k}) := M(x_{1},y_{1})\cdot\ldots\cdot M(x_{k},y_{k}).$$ 
Then there should exist polynomials, $A_{k},B_{k},C_{k}$ and $D_{k}$ in $x_{1},y_{1},\ldots,x_{k},y_{k}$ so that 

$$M = A_{k} \cdot I + B_{k} \cdot \sigma_{1} + C_{k} \cdot \sigma_{2} + D_{k} \cdot \sigma_{3}.$$ 
For instance, for $M(x_{1},y_{1},x_{2},y_{2})$, one finds immediately that 
\begin{eqnarray*}
	A_{2}(x_{1},y_{1},x_{2},y_{2}) &=& \frac{1}{2} (2 + (x_{1}+x_{2})(y_{1}+y_{2}) + x_{1}x_{2}y_{1}y_{2}) \\
	B_{2}(x_{1},y_{1},x_{2},y_{2}) &=& \frac{1}{2}( (x_{1}+x_{2})+(y_{1}+y_{2}) + x_{2}y_{1}(x_{1}+y_{2})) \\
	C_{2}(x_{1},y_{1},x_{2},y_{2}) &=& \frac{\sqrt{-1}}{2}( (x_{1}+x_{2})-(y_{1}+y_{2}) + x_{2}y_{1}(x_{1}-y_{2})) \\
	D_{2}(x_{1},y_{1},x_{2},y_{2}) &=& \frac{1}{2} ( x_{1}(y_{1}+y_{2}) - x_{2} (y_{1}-y_{2}) + x_{1}x_{2}y_{1}y_{2})
\end{eqnarray*}

The four families of polynomials satisfy the following recursive relations:

\begin{eqnarray*}
	A_{k+1}^{1,2,\ldots,k+1} &=& A_{k}^{1,2,\ldots,k}A_{1}^{k+1} + B_{k}^{1,2,\ldots,k}B_{1}^{k+1} + C_{k}^{1,2,\ldots,k}C_{1}^{k+1} + D_{k}^{1,2,\ldots,n}D_{1}^{k+1} \\
	B_{k+1}^{1,2,\ldots,k+1} &=& B_{k}^{1,2,\ldots,k}A_{1}^{k+1} + A_{k}^{1,2,\ldots,k}B_{1}^{k+1} + \sqrt{-1}(C_{k}^{1,2,\ldots,k}D_{1}^{k+1} - D_{k}^{1,2,\ldots,n}C_{1}^{k+1}) \\
	C_{k+1}^{1,2,\ldots,k+1} &=& C_{k}^{1,2,\ldots,k}A_{1}^{k+1} + A_{k}^{1,2,\ldots,k}C_{1}^{k+1} + \sqrt{-1}(D_{k}^{1,2,\ldots,k}B_{1}^{k+1} - B_{k}^{1,2,\ldots,n}D_{1}^{k+1}) \\
	D_{k+1}^{1,2,\ldots,k+1} &=& D_{k}^{1,2,\ldots,k}A_{1}^{k+1} + A_{k}^{1,2,\ldots,k}D_{1}^{k+1} + \sqrt{-1}(B_{k}^{1,2,\ldots,k}C_{1}^{k+1} - C_{k}^{1,2,\ldots,n}B_{1}^{k+1}); 
\end{eqnarray*}
where $A_{k}^{i_{1},i_{2},\ldots,i_{k}}$ stands for $A_{k}(x_{i_{1}},y_{i_{1}},\ldots,x_{i_{k}},y_{i_{k}})$. These set of relations can be obtained directly from the relations among Pauli matrices. It must be pointed out that these are not the only set of equations that defines the families $A_{k},B_{k},C_{k}$ and $D_{k}$. In fact, if we let $p(k)$ denote the number of partitions of the positive integer $k$, then there are $\frac{1}{2}p(k)$-many different such formulations. 

We refer to Table~\ref{table:ex} for the first four members of these families in which to avoid rational coefficients we multiplied each polynomial by $2$.

\begin{table}[h!]
	\centering
  \begin{tabular}{|c||p{10.7cm}|}
	  \hline\hline
	  $2A_{1}$  & $x_{1} y_{1} + 2$ \\\hline
	  $2A_{2}$  & $x_{1} x_{2} y_{1} y_{2} + x_{1} y_{1} + x_{2} y_{1} + x_{1} y_{2} + x_{2} y_{2} + 2$ \\\hline
	  $2A_{3}$  & $x_{1} x_{2} x_{3} y_{1} y_{2} y_{3} + x_{1} x_{2} y_{1} y_{2} + x_{2} x_{3} y_{1} y_{2} + x_{1} x_{2} y_{1} y_{3} + x_{1} x_{3} y_{1} y_{3} + x_{1} x_{3} y_{2} y_{3} + x_{2} x_{3} y_{2} y_{3} + x_{1} y_{1} + x_{2} y_{1} + x_{3} y_{1} + x_{1} y_{2} + x_{2} y_{2} + x_{3} y_{2} + x_{1} y_{3} + x_{2} y_{3} + x_{3} y_{3} + 2 $\\\hline
	  $2A_{4}$  & $x_{1} x_{2} x_{3} x_{4} y_{1} y_{2} y_{3} y_{4} + x_{1} x_{2} x_{3} y_{1} y_{2} y_{3} + x_{2} x_{3} x_{4} y_{1} y_{2} y_{3} + x_{1} x_{2} x_{3} y_{1} y_{2} y_{4} + x_{1} x_{2} x_{4} y_{1} y_{2} y_{4} + x_{1} x_{2} x_{4} y_{1} y_{3} y_{4} + x_{1} x_{3} x_{4} y_{1} y_{3} y_{4} + x_{1} x_{3} x_{4} y_{2} y_{3} y_{4} + x_{2} x_{3} x_{4} y_{2} y_{3} y_{4} + x_{1} x_{2} y_{1} y_{2} + x_{2} x_{3} y_{1} y_{2} + x_{2} x_{4} y_{1} y_{2} + x_{1} x_{2} y_{1} y_{3} + x_{1} x_{3} y_{1} y_{3} + x_{2} x_{4} y_{1} y_{3} + x_{3} x_{4} y_{1} y_{3} + x_{1} x_{3} y_{2} y_{3} + x_{2} x_{3} y_{2} y_{3} + x_{3} x_{4} y_{2} y_{3} + x_{1} x_{2} y_{1} y_{4} + x_{1} x_{3} y_{1} y_{4} + x_{1} x_{4} y_{1} y_{4} + x_{1} x_{3} y_{2} y_{4} + x_{2} x_{3} y_{2} y_{4} + x_{1} x_{4} y_{2} y_{4} + x_{2} x_{4} y_{2} y_{4} + x_{1} x_{4} y_{3} y_{4} + x_{2} x_{4} y_{3} y_{4} + x_{3} x_{4} y_{3} y_{4} + x_{1} y_{1} + x_{2} y_{1} + x_{3} y_{1} + x_{4} y_{1} + x_{1} y_{2} + x_{2} y_{2} + x_{3} y_{2} + x_{4} y_{2} + x_{1} y_{3} + x_{2} y_{3} + x_{3} y_{3} + x_{4} y_{3} + x_{1} y_{4} + x_{2} y_{4} + x_{3} y_{4} + x_{4} y_{4} + 2$ \\\hline\hline
	  $2B_{1}$  & $ x_{1} + y_{1}$ \\\hline
	  $2B_{2}$  & $x_{1} x_{2} y_{1} + x_{2} y_{1} y_{2} + x_{1} + x_{2} + y_{1} + y_{2}$ \\\hline
	  $2B_{3}$  & $x_{1} x_{2} x_{3} y_{1} y_{2} + x_{2} x_{3} y_{1} y_{2} y_{3} + x_{1} x_{2} y_{1} + x_{1} x_{3} y_{1} + x_{1} x_{3} y_{2} + x_{2} x_{3} y_{2} + x_{2} y_{1} y_{2} + x_{2} y_{1} y_{3} + x_{3} y_{1} y_{3} + x_{3} y_{2} y_{3} + x_{1} + x_{2} + x_{3} + y_{1} + y_{2} + y_{3}$ \\\hline
	  $2B_{4}$  & $x_{1} x_{2} x_{3} x_{4} y_{1} y_{2} y_{3} + x_{2} x_{3} x_{4} y_{1} y_{2} y_{3} y_{4} + x_{1} x_{2} x_{3} y_{1} y_{2} + x_{1} x_{2} x_{4} y_{1} y_{2} + x_{1} x_{2} x_{4} y_{1} y_{3} + x_{1} x_{3} x_{4} y_{1} y_{3} + x_{1} x_{3} x_{4} y_{2} y_{3} + x_{2} x_{3} x_{4} y_{2} y_{3} + x_{2} x_{3} y_{1} y_{2} y_{3} + x_{2} x_{3} y_{1} y_{2} y_{4} + x_{2} x_{4} y_{1} y_{2} y_{4} + x_{2} x_{4} y_{1} y_{3} y_{4} + x_{3} x_{4} y_{1} y_{3} y_{4} + x_{3} x_{4} y_{2} y_{3} y_{4} + x_{1} x_{2} y_{1} + x_{1} x_{3} y_{1} + x_{1} x_{4} y_{1} + x_{1} x_{3} y_{2} + x_{2} x_{3} y_{2} + x_{1} x_{4} y_{2} + x_{2} x_{4} y_{2} + x_{2} y_{1} y_{2} + x_{1} x_{4} y_{3} + x_{2} x_{4} y_{3} + x_{3} x_{4} y_{3} + x_{2} y_{1} y_{3} + x_{3} y_{1} y_{3} + x_{3} y_{2} y_{3} + x_{2} y_{1} y_{4} + x_{3} y_{1} y_{4} + x_{4} y_{1} y_{4} + x_{3} y_{2} y_{4} + x_{4} y_{2} y_{4} + x_{4} y_{3} y_{4} + x_{1} + x_{2} + x_{3} + x_{4} + y_{1} + y_{2} + y_{3} + y_{4}$ \\\hline\hline
	  $2C_{1}$  & $ \sqrt{-1} (x_{1} - y_{1})$ \\\hline
	  $2C_{2}$  & $\sqrt{-1} (x_{1} x_{2} y_{1} -  x_{2} y_{1} y_{2} + x_{1} + x_{2} - y_{1} - y_{2} )$ \\\hline
	  $2C_{3}$  & $\sqrt{-1} ( x_{1} x_{2} x_{3} y_{1} y_{2} -  x_{2} x_{3} y_{1} y_{2} y_{3} + x_{1} x_{2} y_{1} + x_{1} x_{3} y_{1} + x_{1} x_{3} y_{2} + x_{2} x_{3} y_{2} - x_{2} y_{1} y_{2} -  x_{2} y_{1} y_{3} - x_{3} y_{1} y_{3} - x_{3} y_{2} y_{3} + x_{1} + x_{2} + x_{3} - y_{1} - y_{2} - y_{3})$ \\\hline
	  $2C_{4}$  & $\sqrt{-1} (x_{1} x_{2} x_{3} x_{4} y_{1} y_{2} y_{3} - x_{2} x_{3} x_{4} y_{1} y_{2} y_{3} y_{4} +  x_{1} x_{2} x_{3} y_{1} y_{2} + x_{1} x_{2} x_{4} y_{1} y_{2} + x_{1} x_{2} x_{4} y_{1} y_{3} + x_{1} x_{3} x_{4} y_{1} y_{3} + x_{1} x_{3} x_{4} y_{2} y_{3} + x_{2} x_{3} x_{4} y_{2} y_{3} - x_{2} x_{3} y_{1} y_{2} y_{3} - x_{2} x_{3} y_{1} y_{2} y_{4} - x_{2} x_{4} y_{1} y_{2} y_{4} - x_{2} x_{4} y_{1} y_{3} y_{4} - x_{3} x_{4} y_{1} y_{3} y_{4} - x_{3} x_{4} y_{2} y_{3} y_{4} + x_{1} x_{2} y_{1} + x_{1} x_{3} y_{1} + x_{1} x_{4} y_{1} + x_{1} x_{3} y_{2} + x_{2} x_{3} y_{2} + x_{1} x_{4} y_{2} + x_{2} x_{4} y_{2} - x_{2} y_{1} y_{2} + x_{1} x_{4} y_{3} + x_{2} x_{4} y_{3} + x_{3} x_{4} y_{3} - x_{2} y_{1} y_{3} - x_{3} y_{1} y_{3} - x_{3} y_{2} y_{3} - x_{2} y_{1} y_{4} - x_{3} y_{1} y_{4} - x_{4} y_{1} y_{4} - x_{3} y_{2} y_{4} - x_{4} y_{2} y_{4} - x_{4} y_{3} y_{4} + x_{1} + x_{2} + x_{3} + x_{4} - y_{1} - y_{2} - y_{3} - y_{4})$ \\\hline\hline
	  $2D_{1}$  & $x_{1} y_{1}$ \\\hline
	  $2D_{2}$  & $x_{1} x_{2} y_{1} y_{2} + x_{1} y_{1} - x_{2} y_{1} + x_{1} y_{2} + x_{2} y_{2}$ \\\hline
	  $2D_{3}$  & $x_{1} x_{2} x_{3} y_{1} y_{2} y_{3} + x_{1} x_{2} y_{1} y_{2} - x_{2} x_{3} y_{1} y_{2} + x_{1} x_{2} y_{1} y_{3} + x_{1} x_{3} y_{1} y_{3} + x_{1} x_{3} y_{2} y_{3} + x_{2} x_{3} y_{2} y_{3} + x_{1} y_{1} - x_{2} y_{1} - x_{3} y_{1} + x_{1} y_{2} + x_{2} y_{2} - x_{3} y_{2} + x_{1} y_{3} + x_{2} y_{3} + x_{3} y_{3}$ \\\hline
	  $2D_{4}$  & $x_{1} x_{2} x_{3} x_{4} y_{1} y_{2} y_{3} y_{4} + x_{1} x_{2} x_{3} y_{1} y_{2} y_{3} - x_{2} x_{3} x_{4} y_{1} y_{2} y_{3} + x_{1} x_{2} x_{3} y_{1} y_{2} y_{4} + x_{1} x_{2} x_{4} y_{1} y_{2} y_{4} + x_{1} x_{2} x_{4} y_{1} y_{3} y_{4} + x_{1} x_{3} x_{4} y_{1} y_{3} y_{4} + x_{1} x_{3} x_{4} y_{2} y_{3} y_{4} + x_{2} x_{3} x_{4} y_{2} y_{3} y_{4} + x_{1} x_{2} y_{1} y_{2} - x_{2} x_{3} y_{1} y_{2} - x_{2} x_{4} y_{1} y_{2} + x_{1} x_{2} y_{1} y_{3} + x_{1} x_{3} y_{1} y_{3} - x_{2} x_{4} y_{1} y_{3} - x_{3} x_{4} y_{1} y_{3} + x_{1} x_{3} y_{2} y_{3} + x_{2} x_{3} y_{2} y_{3} - x_{3} x_{4} y_{2} y_{3} + x_{1} x_{2} y_{1} y_{4} + x_{1} x_{3} y_{1} y_{4} + x_{1} x_{4} y_{1} y_{4} + x_{1} x_{3} y_{2} y_{4} + x_{2} x_{3} y_{2} y_{4} + x_{1} x_{4} y_{2} y_{4} + x_{2} x_{4} y_{2} y_{4} + x_{1} x_{4} y_{3} y_{4} + x_{2} x_{4} y_{3} y_{4} + x_{3} x_{4} y_{3} y_{4} + x_{1} y_{1} - x_{2} y_{1} - x_{3} y_{1} - x_{4} y_{1} + x_{1} y_{2} + x_{2} y_{2} - x_{3} y_{2} - x_{4} y_{2} + x_{1} y_{3} + x_{2} y_{3} + x_{3} y_{3} - x_{4} y_{3} + x_{1} y_{4} + x_{2} y_{4} + x_{3} y_{4} + x_{4} y_{4}$\\\hline\hline
  \end{tabular}
  \caption{First few members of the families $A_{k},B_{k},C_{k}$ and $D_{k}$.}
  \label{table:ex}
\end{table}

\subsection{Properties}
\label{sec:properties}

The following is a list of properties satisfied by these polynomials:

\begin{itemize}
	\item $B_{k}, C_{k}$ and $D_{k}$ do not have any degree 0 term. That of $A_{k}$ is equal to $\frac{1}{2}\cdot 2$.
	\item For any integer $k>1$ neither of the families contain a term of the form $x_{i}^{k}$ or $y_{i}^{k}$.
	\item The polynomials $A_{k}(x,y,x,y,\ldots,x,y)$ and $D_{k}(x,y,x,y,\ldots,x,y)$ are comprised only of monomials of the form $x^{l}y^{l}$ for every $1 \leq l \leq k$. As a resuly, all monomials in $A_{k}$ and $D_{k}$ are of even degree.
	\item The polynomials $B_{k}(x,y,x,y,\ldots,x,y)$ and $C_{k}(x,y,x,y,\ldots,x,y)$ are comprised only of monomials of the form $x^{l}y^{l+1}$ for every $1 \leq l \leq k-1$ and $x^{l+1}y^{l}$ for every $1\leq l \leq k-1$. Moreover, the number of terms of the form $x^{l}y^{l+1}$ is equal to the number of terms of the form $x^{l+1}y^{l}$. As a result, all monomials in $B_{k}$ and $C_{k}$ are of odd degree.
	\item Degree of $A_{k}$ and $D_{k}$ are $2k$; whereas that  of $B_{k}$ and $C_{k}$ are $2k-1$.
	\item $2A_{k}, 2B_{k}, \frac{2}{\sqrt{-1}}C_{k}$ and $2D_{k}$ are elements of the ring $\ZZ[x_{1},y_{1},\ldots,x_{k},y_{k}]$ and are irreducible in this ring.
\end{itemize}

Proofs of the facts listed above can be obtained by induction which we leave to the reader. Proof of the following theorem exemplifies arguments involved in such proofs.

\begin{theorem}
	The polynomial $2A_{k}(x,x,\ldots,x)$ is equal to the $2k^{\mbox{th}}$ Lucas polynomial\footnote{The $k^{\mbox{th}}$ Lucas polynomial, $L_{k}(x)$, is defined as $L_{k}(x) = 2^{-k}(( x - \sqrt{x^{2} + 4})^{k} + (x + \sqrt{x^{2}+4})^{k})$. For $k \geq 1$, Lucas polynomials satisfy the recursion $L_{k+1}(x) = x L_{k}(x) + L_{k-1}(x)$, with initial conditions being $L_{0}(x) = 2$ and $L_{1}(x) = x$. The first few Lucas polynomials are $L_{2}(x) = x^{2}+2$, $L_{3}(x) = x^{3} + 3x$, $L_{4}(x) = x^{4} + 4x^{2} + 2$.}, denoted by $L_{2k}(x)$.
	\label{thm:lucas}
\end{theorem}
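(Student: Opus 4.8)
The plan is to reduce everything to a trace computation. Taking the trace of the defining identity $M = A_{k}\cdot I + B_{k}\cdot\sigma_{1} + C_{k}\cdot\sigma_{2} + D_{k}\cdot\sigma_{3}$ and using that $\sigma_{1},\sigma_{2},\sigma_{3}$ are traceless while $\tr(I)=2$ gives $2A_{k} = \tr(M)$. Now specializing all $2k$ variables (both the $x_{i}$ and the $y_{i}$) to the common value $x$ turns every factor $M(x_{i},y_{i})$ into the single matrix $N := M(x,x) = \begin{pmatrix} 1+x^{2} & x \\ x & 1 \end{pmatrix}$, so that $M(x,\ldots,x) = N^{k}$ and hence $2A_{k}(x,\ldots,x) = \tr(N^{k})$. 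The whole problem is thereby recast as computing the trace of a power of one explicit $2\times2$ matrix.

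Next I would record the two invariants of $N$. A direct computation gives $\tr(N) = x^{2}+2$ and $\det(N) = (1+x^{2}) - x^{2} = 1$. Thus $N$ has determinant $1$, and its eigenvalues $\lambda,\mu$ satisfy $\lambda+\mu = x^{2}+2$ and $\lambda\mu = 1$; equivalently they are the two roots of $t^{2} - (x^{2}+2)t + 1 = 0$. Since the trace of the $k$th power of a $2\times2$ matrix equals the power sum $\lambda^{k}+\mu^{k}$ of its eigenvalues (counted with multiplicity), it remains only to identify $\lambda^{k}+\mu^{k}$ with $L_{2k}(x)$.

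The key step is to connect this quadratic to the Lucas recursion. Writing $\alpha = \tfrac12\bigl(x + \sqrt{x^{2}+4}\bigr)$ and $\beta = \tfrac12\bigl(x - \sqrt{x^{2}+4}\bigr)$ for the quantities appearing in the footnoted definition of $L_{n}$, one has $L_{n}(x) = \alpha^{n} + \beta^{n}$ together with $\alpha+\beta = x$ and $\alpha\beta = -1$. The observation that makes everything fit is that the \emph{squares} $\alpha^{2},\beta^{2}$ satisfy $\alpha^{2}+\beta^{2} = (\alpha+\beta)^{2} - 2\alpha\beta = x^{2}+2$ and $\alpha^{2}\beta^{2} = 1$, so $\alpha^{2}$ and $\beta^{2}$ are exactly the roots of the same quadratic $t^{2} - (x^{2}+2)t + 1 = 0$ as $\lambda,\mu$. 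Hence $\{\lambda,\mu\} = \{\alpha^{2},\beta^{2}\}$, and therefore $2A_{k}(x,\ldots,x) = \tr(N^{k}) = \lambda^{k}+\mu^{k} = \alpha^{2k} + \beta^{2k} = L_{2k}(x)$, as claimed.

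I expect the only genuinely delicate points to be bookkeeping: confirming that the normalization factor $2^{-n}$ in the stated definition of $L_{n}$ is consistent with $L_{n}(x) = \alpha^{n}+\beta^{n}$, and being careful that the specialization collapses \emph{all} $2k$ arguments to $x$ so as to produce precisely $N^{k}$. One could instead argue in the inductive style promised in Section~\ref{sec:properties}, feeding the specialization into the recursion for $A_{k+1}$ and matching it against $L_{k+1}(x) = xL_{k}(x) + L_{k-1}(x)$; but the eigenvalue argument above is cleaner and sidesteps any need to track $B_{k}, C_{k}, D_{k}$ under specialization. A quick sanity check against Table~\ref{table:ex}, namely $2A_{1}(x,x) = x^{2}+2 = L_{2}(x)$ and $2A_{2}(x,x,x,x) = x^{4}+4x^{2}+2 = L_{4}(x)$, confirms the normalization is correct.
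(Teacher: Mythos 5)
Your proof is correct, and it takes a genuinely different route from the paper's. The paper argues in the recursive style it advertises for the properties of Section~\ref{sec:properties}: it first massages the Lucas recursion into its even-index form $L_{2k}(x) = (x^{2}+2)L_{2(k-1)}(x) - L_{2(k-2)}(x)$, then expands $M(x,x)^{k}$ in the Pauli basis and uses $\det M = A^{2}-(B^{2}+C^{2}+D^{2}) = 1$ to show that the identity-component $f_{k}$ of $M(x,x)^{k}$ obeys the same three-term recursion $f_{k+1} = 2Af_{k} - f_{k-1}$ with matching initial values, so that equality follows by induction. You instead bypass recursion altogether: from $2A_{k} = \tr(M)$ you reduce to $\tr(N^{k})$ with $N = M(x,x)$, and since $\tr(N) = x^{2}+2$ and $\det(N) = 1$, the eigenvalues of $N$ are exactly the squares $\alpha^{2},\beta^{2}$ of the roots $\alpha,\beta$ in the footnote's closed-form definition, whence $\tr(N^{k}) = \alpha^{2k}+\beta^{2k} = L_{2k}(x)$. (One pedantic addition: the eigenvalue identity is pointwise in $x$, so you should note that agreement at infinitely many complex values forces the polynomial identity, or work over the algebraic closure of $\CC(x)$ from the start.) Your argument is shorter, requires no tracking of the $B$, $C$, $D$ components, and dovetails with the paper's later use of $\mathcal{L}_{2k} = 2A_{k}$ as a trace in the proof of Theorem~\ref{thm:affine/equation}. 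What the paper's method buys is uniformity: the identical induction, with only the initial conditions changed and the auxiliary component $g_{k}$ retained, also disposes of the Fibonacci statement (Theorem~\ref{thm:fibonacci}) and the proposition on $D_{k}$, where the quantity of interest is an off-diagonal component rather than a trace and your eigenvalue argument would need to be supplemented; it is also the formulation that generalizes to polynomials in $A$ and $\det(M)$ as in the cited work of Herpin.
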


\begin{proof}
	We know that $L_{2k}(x) = xL_{2k-1}(x) + L_{2(k-1)}(x)$. Writing the same recursion formula for $L_{2k-1}(x)$, multiplying by $x$ and subtracting from the first, we obtain $L_{2k}(x) = (x^{2}+1)L_{2(k-1)} + xL_{2k-3}(x)$. Solving for $xL_{2k-3}(x)$ from the recursion for $L_{2(k-1)}(x)$ we finally obtain the recursion formula for the even terms in the Lucas polynomial sequence, which reads $L_{2k}(x) = (x^{2} + 2)L_{2(k-1)}(x) - L_{2(k-2)}(x)$ for $k \geq 2$. 
	On the other hand, by definition, we have $M(x,x,\ldots,x) = M(x,x)^{k}$. As noted above for $k=1$, $2A =2+x^{2}$, $B = 2x$, $C = 0$ and $2D =x^{2}$. Suppose that  $M^{k-1} =  f_{k-1}(x) + B g_{k-1}(x)\sigma_{1} + Cg_{k-1}(x)\sigma_{2} + D g_{k-1}(x)\sigma_{3}$. If we write $M^{k}$ with respect to the basis consisting of identity and the Pauli matrices then we obtain the following set of equations
	\begin{eqnarray*}
		f_{k}(x) &=& Af_{k-1}(x) + B^{2}g_{k-1}(x) + C^{2}g_{k-1}(x) + D^{2}g_{k-1}(x))\\
		g_{k}(x) &=& Ag_{k-1}(x) + f_{k-1}(x)
	\end{eqnarray*}
	We rewrite the first equality using $\det(M) = 1 = A^{2} - (B^{2}+C^{2}+D^{2})$ and get $f_{k}(x) = Af_{k-1}(x) + (A^{2}-1)g_{k-1}(x)$. This establishes the fact that there are polynomial families $f_{k}$ and $g_{k}$ indexed over the set of positive integers so that $M^{k} =  f_{k}(x) + B g_{k}(x)\sigma_{1} + Cg_{k}(x)\sigma_{2} + D g_{k}(x)\sigma_{3}$. A short algebraic manipulation on these equations gives us the recurrence relation $f_{k+1}(x) = 2Af_{k}(x) - f_{k-1}(x)$, subject to the initial conditions that $f_{0}(x) = 1$ and $f_{1}(x) = 2A = 2+x^{2}$.  
\end{proof}

Let us remark that the above method can be applied in a slightly more general setup where one obtains polynomials $f_{k}$ and $g_{k}$ of $A$ and $\det(M)$, see \cite{herpin/lucas}. One may immediately ask analogous questions for the remaining polynomial families. It is immediate to prove that $C_{k}(x,x,\ldots,x) = 0$ for any positive integer $k$. The probably more interesting result is the following result whose proof is almost identical (the only essential difference being determining the initial condition) to the proof of Theorem~\ref{thm:lucas} and therefore will be omitted.

\begin{theorem}
	The polynomial $B_{k}(x,x,\ldots,x)$ is the $2k^{\mbox{th}}$ Fibonacci polynomial\footnote{The $k^{\mbox{th}}$ Fibonacci polynomial is defined as $F_{k}(x) = 2^{-k} \frac{(x +\sqrt{x^{2}+4})^{k} - (x - \sqrt{x^{2}+4})^{k}}{\sqrt{x^{2}+4}}$. For $k \geq 1$ Fibonacci polynomials satisfy the recursion $F_{k+1}(x) = xF_{k}(x) +F_{k-1}(x)$ subject to the initial conditions $F_{0}(x) = 0$ and $F_{1}(x) = 1$. The first few Fibonacci polynomials are $F_{2}(x) = x$, $F_{3}(x) = x^{2}+1$, $F_{4}(x) = x^{3} + 2x$.}, denoted by $F_{2k}(x)$.
	\label{thm:fibonacci}
\end{theorem}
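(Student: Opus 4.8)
The plan is to follow the proof of Theorem~\ref{thm:lucas} essentially verbatim, but tracking the coefficient of $\sigma_{1}$ instead of the coefficient of the identity, and then to pin down the one genuinely different ingredient, namely the initial data. First I would record, exactly as in the Lucas case, the three-term recursion satisfied by the even-indexed Fibonacci polynomials. Starting from $F_{2k}(x) = xF_{2k-1}(x) + F_{2k-2}(x)$, writing the defining recursion for $F_{2k-1}(x)$, and eliminating the odd-indexed terms by a second application of the recursion, one arrives at
$$F_{2k}(x) = (x^{2}+2)F_{2(k-1)}(x) - F_{2(k-2)}(x), \quad k \geq 2,$$
which is the same recursion as for $L_{2k}$ but now anchored at $F_{0}(x) = 0$ and $F_{2}(x) = x$.

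Next I would reuse the matrix computation from the proof of Theorem~\ref{thm:lucas}. Setting $x_{1} = \ldots = y_{k} = x$ gives $M(x,x,\ldots,x) = M(x,x)^{k}$, and by definition $B_{k}(x,\ldots,x)$ is precisely the coefficient of $\sigma_{1}$ in this power. Writing $M^{k} = f_{k}(x)\,I + B g_{k}(x)\sigma_{1} + C g_{k}(x)\sigma_{2} + D g_{k}(x)\sigma_{3}$ as in the earlier proof, with $A,B,C,D$ the single-step coefficients, one has $B = x$ and $C = 0$ when $y = x$, so the quantity to be identified is $B_{k}(x,\ldots,x) = B g_{k}(x) = x g_{k}(x)$. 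The very same pair of equations $f_{k} = Af_{k-1} + (A^{2}-1)g_{k-1}$ and $g_{k} = Ag_{k-1} + f_{k-1}$ governs both sequences; eliminating $f$ shows that $g_{k}$ obeys the identical recurrence $g_{k+1} = 2A g_{k} - g_{k-1} = (x^{2}+2)g_{k} - g_{k-1}$. Multiplying through by $x$, the polynomials $b_{k}(x) := x g_{k}(x) = B_{k}(x,\ldots,x)$ therefore satisfy $b_{k+1} = (x^{2}+2)b_{k} - b_{k-1}$, matching the Fibonacci recursion derived above.

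The only point requiring care, and the essential difference from the Lucas argument, is the initial condition. Since $M^{0} = I$ has no $\sigma_{1}$-component we get $g_{0} = 0$, whereas the case $k=1$ forces $g_{1} = 1$; hence $b_{0} = 0$ and $b_{1} = x$. These values match $F_{0} = 0$ and $F_{2} = x$ exactly. As $b_{k}$ and $F_{2k}$ now satisfy the same second-order recursion with the same two initial terms, an immediate induction yields $B_{k}(x,\ldots,x) = b_{k}(x) = F_{2k}(x)$, which is the desired identity.

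The main obstacle is the modest but easily mishandled bookkeeping of the initial data: one is tempted to expect a match with $F_{1}$ or $F_{0}$ under some other normalization, and verifying that the correct correspondence is with the \emph{even}-indexed Fibonacci polynomials anchored at $F_{2} = x$, rather than at $F_{1} = 1$, is the step where the argument could most plausibly go astray. Everything else is dictated automatically by the multiplicative structure of $M(x,x)^{k}$ and is shared word-for-word with the proof of Theorem~\ref{thm:lucas}.
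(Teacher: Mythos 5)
Your proposal is correct and takes precisely the route the paper intends: the paper omits this proof, stating it is almost identical to that of Theorem~\ref{thm:lucas} with the only essential difference being the initial condition, and that is exactly what you carry out --- the same pair of recursions $f_{k} = Af_{k-1} + (A^{2}-1)g_{k-1}$, $g_{k} = Ag_{k-1} + f_{k-1}$, now read off from the $\sigma_{1}$-coefficient $x\,g_{k}(x)$ and anchored at $b_{0}=0$, $b_{1}=x$, matching $F_{0}$ and $F_{2}$. As a minor point in your favor, you correctly use $B = x$ at the single-step level (the paper's Lucas proof contains the slip ``$B = 2x$'', harmless there but fatal here, since it would produce $2F_{2k}$ instead of $F_{2k}$).
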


Encouraged by Theorem~\ref{thm:lucas} we make the following: 

\begin{definition}
	For $k \in \ZZ_{\geq1}$ we call the polynomial $2A_{k}$ to be the $2k^{\mbox{th}}$ \emph{multivariate Lucas polynomial} and denote it by $\mathcal{L}_{2k}$. Similarly, we define the $2k^{\mbox{th}}$ \emph{multivariate Fibonacci polynomial} as $B_{k}$ and denote it by $\mathcal{F}_{2k}$.
\end{definition}

To answer the analogous question for $D_{k}$ we note the following:

\begin{proposition}
	For any positive integer $k$ we have $\frac{B_{k}(x,x,\ldots,x)}{2x} = \frac{D_{k}(x,x,\ldots,x)}{x^{2}}$.
\end{proposition}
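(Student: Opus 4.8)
The plan is to abandon the explicit polynomial expansions and argue entirely at the level of the defining matrix. First I would specialize all variables to $x$, so that $M(x,x,\ldots,x) = M(x,x)^{k}$, where
$$M(x,x) = \begin{pmatrix} 1+x^{2} & x \\ x & 1 \end{pmatrix}, \qquad M(x,x)^{k} = \begin{pmatrix} p_{k} & q_{k} \\ q_{k} & s_{k} \end{pmatrix}.$$
The key structural observation is that $M(x,x)$ is \emph{symmetric}, hence so is every power $M(x,x)^{k}$, which is why the off-diagonal entries can be written with a single symbol $q_{k}$. Expanding a symmetric matrix in the basis $\{I,\sigma_{1},\sigma_{2},\sigma_{3}\}$ forces the $\sigma_{2}$-coefficient to vanish (this recovers the fact $C_{k}(x,\ldots,x)=0$ noted in the text) and identifies the coefficients we need as $B_{k}(x,\ldots,x) = q_{k}$ and $2D_{k}(x,\ldots,x) = p_{k}-s_{k}$. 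Thus the proposition is equivalent to the single polynomial identity $x\,q_{k} = p_{k}-s_{k}$.

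Next I would extract this identity from the symmetry of the \emph{following} power. Since $M(x,x)^{k+1} = M(x,x)^{k}\cdot M(x,x)$ is again symmetric, its $(1,2)$ and $(2,1)$ entries must agree. A one-line matrix multiplication gives $(1,2) = p_{k}x + q_{k}$ and $(2,1) = q_{k}(1+x^{2}) + s_{k}x$; equating these and collecting terms yields $x(p_{k}-s_{k}) = x^{2}q_{k}$, which is exactly the relation sought (after cancelling $x$). Note that this argument sidesteps any induction on the four simultaneous recursion relations for $A_{k},B_{k},C_{k},D_{k}$: the claim falls out of symmetry alone. One could alternatively invoke Theorem~\ref{thm:fibonacci} to substitute $B_{k}(x,\ldots,x)=F_{2k}(x)$ and derive a matching closed form for $D_{k}(x,\ldots,x)$, but the symmetry route is shorter.

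The only point requiring the slightest care is the cancellation of the common factor $x$: the equation produced by symmetry is $x(p_{k}-s_{k}) = x^{2}q_{k}$, whereas I want $p_{k}-s_{k} = x\,q_{k}$. Since $p_{k},q_{k},s_{k}\in\CC[x]$ and $\CC[x]$ is an integral domain, $x$ is a nonzerodivisor and the factor cancels as a genuine polynomial identity; equivalently, the stated fractional form $\frac{B_{k}(x,\ldots,x)}{2x} = \frac{D_{k}(x,\ldots,x)}{x^{2}}$ is simply this identity divided by $2x^{2}$. I do not expect any real obstacle here — the substance of the proof is the symmetry observation, and the rest is routine.
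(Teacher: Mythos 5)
Your proof is correct, and it takes a genuinely different route from the paper's. The paper's sketch reuses the recursion machinery from the proof of Theorem~\ref{thm:lucas}: one derives a two-term recursion for $2D_{k}(x,\ldots,x)/x$, observes that it coincides with the recursion satisfied by the (even-indexed) Fibonacci polynomials with matching initial conditions, and concludes $2D_{k}(x,\ldots,x)/x = F_{2k}(x) = B_{k}(x,\ldots,x)$, the last equality being Theorem~\ref{thm:fibonacci}. You instead argue from symmetry alone: since $M(x,x)$ is symmetric, so is every power $M(x,x)^{k}$, which forces the $\sigma_{2}$-coefficient to vanish (recovering $C_{k}(x,\ldots,x)=0$) and identifies $B_{k}(x,\ldots,x)=q_{k}$ and $2D_{k}(x,\ldots,x)=p_{k}-s_{k}$; equating the $(1,2)$ and $(2,1)$ entries of $M(x,x)^{k+1}=M(x,x)^{k}M(x,x)$ then yields $x(p_{k}-s_{k})=x^{2}q_{k}$, and cancelling $x$ in the integral domain $\CC[x]$ finishes the proof. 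Your basis identities (the $\sigma_{1}$-coefficient is the average of the off-diagonal entries, the $\sigma_{3}$-coefficient half the difference of the diagonal ones) and the product entries are correct, and the cancellation step is properly justified as a polynomial identity. The trade-off: the paper's route produces the sharper closed form $2D_{k}(x,\ldots,x)/x=F_{2k}(x)$, adding one more entry to the dictionary with classical Fibonacci and Lucas polynomials that is the paper's theme, whereas your route is shorter and self-contained --- no induction, no initial-condition bookkeeping, and no appeal to Theorem~\ref{thm:fibonacci}.
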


\begin{proof}[Sketch of proof.]
	Using the method above, one obtains a recursion formula for the polynomial $\frac{2 D_{k}(x,x,\ldots,x)}{x}$ which is exactly the same as Fibonacci polynomials with the same initial conditions.
\end{proof}

The multivariate Lucas and Fibonacci polynomial families enjoy the \emph{expected} properties of their one variable versions, which are consequences of Theorems~\ref{thm:fibonacci} and ~\ref{thm:lucas}. For instance, $\mathcal{L}_{2k}(0,0,\ldots,0) = 2$, $\mathcal{L}_{2k}(1,1,\ldots,1) = L_{2k}$; where $L_{2k}$ denote the $2k^{\mbox{th}}$ Lucas number\footnote{Lucas numbers are defined recursively as $L_{1} = 1$, $L_{2} = 3 $ and $L_{k+1} = L_{k} + L_{k-1}$.}. We also have: 
\begin{eqnarray*}
	\mathcal{L}_{2k} (x_{1},y_{1},\ldots,x_{k-1},y_{k-1},0,0) = \mathcal{L}_{2(k-1)} (x_{1},y_{1},\ldots,x_{k-1},y_{k-1}).
	\label{eq:lucas/going/down}
\end{eqnarray*}

We may then obtain the following using induction:

\begin{lemma}
	For any positive integer $l$ we have
	$$\mathcal{L}_{2(k+l)}(x_{1},y_{1},\ldots,x_{k},y_{k},\underbrace{0,0,\ldots,0,0}_{2l-\mbox{many}})= \mathcal{L}_{2k}(x_{1},y_{1},\ldots,x_{k},y_{k}).$$
	\label{thm:zeroes/in/cark/equation}
\end{lemma}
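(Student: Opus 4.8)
The plan is to induct on $l$, using the single-variable-pair reduction
$\mathcal{L}_{2k}(x_{1},y_{1},\ldots,x_{k-1},y_{k-1},0,0)=\mathcal{L}_{2(k-1)}(x_{1},y_{1},\ldots,x_{k-1},y_{k-1})$
recorded just above the statement as the engine of the argument. The essential observation is that this identity holds at \emph{every} index, so it may be instantiated at the top index $k+l$ and not merely at $k$. For the base case $l=1$ I would simply read off the claim: substituting $k+1$ for $k$ in the reduction gives $\mathcal{L}_{2(k+1)}(x_{1},y_{1},\ldots,x_{k},y_{k},0,0)=\mathcal{L}_{2k}(x_{1},y_{1},\ldots,x_{k},y_{k})$, which is exactly the assertion for $l=1$.

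For the inductive step I would assume the statement for $l-1$ and start from $\mathcal{L}_{2(k+l)}$ evaluated at $(x_{1},y_{1},\ldots,x_{k},y_{k})$ followed by $2l$ zeros. Applying the reduction at index $k+l$ peels off the outermost zero pair $(x_{k+l},y_{k+l})=(0,0)$, leaving $\mathcal{L}_{2(k+l-1)}$ evaluated at $(x_{1},y_{1},\ldots,x_{k},y_{k})$ followed by only $2(l-1)$ zeros. The induction hypothesis then identifies this last expression with $\mathcal{L}_{2k}(x_{1},y_{1},\ldots,x_{k},y_{k})$, closing the induction.

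There is no genuine obstacle here; the only point requiring care is bookkeeping of indices, namely applying the base identity at the top index $k+l$ (where it removes the outermost zero pair) rather than at $k$, and checking that the surviving arguments are precisely those appearing in the induction hypothesis. I would also remark that the lemma admits a one-line conceptual proof bypassing induction altogether. Since $M(0,0)=I$, setting $x_{k+i}=y_{k+i}=0$ for $1\le i\le l$ turns each of the trailing factors $M(x_{k+i},y_{k+i})$ into the identity matrix, so the product $M(x_{1},y_{1},\ldots,x_{k+l},y_{k+l})$ collapses to $M(x_{1},y_{1},\ldots,x_{k},y_{k})$. Uniqueness of the expansion in the basis $\{I,\sigma_{1},\sigma_{2},\sigma_{3}\}$ then forces the $I$-coefficients $A_{k+l}$ and $A_{k}$ to agree on these arguments, and multiplying by $2$ yields the claimed equality of $\mathcal{L}_{2(k+l)}$ and $\mathcal{L}_{2k}$.
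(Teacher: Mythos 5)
Your induction is exactly the paper's argument: the paper records the one-pair reduction $\mathcal{L}_{2k}(x_{1},y_{1},\ldots,x_{k-1},y_{k-1},0,0)=\mathcal{L}_{2(k-1)}(x_{1},y_{1},\ldots,x_{k-1},y_{k-1})$ and then states the lemma follows ``using induction,'' which is precisely the peeling-off-the-outermost-zero-pair step you spell out, so the main body of your proposal is correct and matches the paper. Your closing remark, however, is a genuinely different and arguably better route: since $M(0,0)=I$, the product $M(x_{1},y_{1},\ldots,x_{k+l},y_{k+l})$ literally equals $M(x_{1},y_{1},\ldots,x_{k},y_{k})$ once the trailing pairs vanish, and uniqueness of the expansion in the basis $\{I,\sigma_{1},\sigma_{2},\sigma_{3}\}$ (equivalently, taking traces, since the Pauli matrices are traceless and $\mathcal{L}_{2k}=2A_{k}=\tr M$) gives the identity with no induction at all. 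This matrix-level argument buys more than the lemma asks for: because the factors $M(0,0)=I$ can be deleted from \emph{any} positions in the product, it simultaneously proves the paper's later assertion that zero pairs inserted at an arbitrary index $i$ can be removed, which the paper instead derives separately from the cyclic invariance of $\mathcal{L}_{2k}$.
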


Similar properties hold also for the Fibonacci family $\mathcal{F}_{2k}$. Namely, $\mathcal{F}_{2k}(1,1,\ldots,1) = F_{2k}$; where $F_{2k}$ stands for the $2k^{\mbox{th}}$ Fibonacci number\footnote{Fibonacci numbers are defined recusively as $F_{k+1} = F_{k} + F_{k-1}$ subject to the initial conditions $F_{0} = 0$ and $F_{1} = 1$.}. We finally have 
$$\mathcal{F}_{2k}(x_{1},y_{1},\ldots,x_{k-1},y_{k-1},0,0)= \mathcal{F}_{2(k-1)}(x_{1},y_{1},\ldots,x_{k-1},y_{k-1}).$$ 
We invite the reader to discover the related phenomenon for the families $C_{k}$ and $D_{k}$.


The generator $1$ of the group $\ZZ/k\ZZ$ acts on the ordered pair $(x_{1},y_{1},\ldots,x_{k},y_{k})$ by sending it to $(x_{k},y_{k},x_{1},y_{1},\ldots,x_{k-1},y_{k-1})$. This action leaves $\mathcal{L}_{2k}$ invariant; that is

\begin{eqnarray}
	\mathcal{L}_{2k}(1 \cdot (x_{1},y_{1},\ldots,x_{k},y_{k})) = \mathcal{L}_{2k}(x_{1},y_{1},\ldots,x_{k},y_{k}).
	\label{eq:lucas/invariance}
\end{eqnarray}

Indeed, this symmetry can be seen easily by considering the action on the matrix $M(x_{1},y_{1},\ldots,x_{k},y_{k})$ and noting that the trace is invariant within a conjugacy class. This property has the consequence that for any $1\leq i \leq k$:

\begin{eqnarray*}
		\mathcal{L}_{2k}(x_{1},y_{1},\ldots,x_{i-1},y_{i-1},0,0,x_{i+1},y_{i+1},\ldots,x_{k},y_{k}) = \mathcal{L}_{2(k-1)}(x_{1},y_{1},\ldots,x_{k-1},y_{k-1}).
\end{eqnarray*}

Although $B_{k}$ and $C_{k}$ does not enjoy such a property, let us state, without proof, the following symmetry of $D_{k}$:
$$1 \cdot D_{k}(y_{1},x_{1},\ldots,y_{k},x_{k}) = D_{k}(x_{1},y_{1},\ldots,x_{k},y_{k}).$$

\section{\c{C}ark surfaces}

The main aim in this section is to define \c{c}ark surfaces using the multivariate Lucas polynomials and obtain a one-to-one correspondence between integral points of these surfaces and narrow ideal classes. Throughout $K$ stands for a real quadratic number field. We will only explain the theory of narrow ideal classes in such fields, although a much more general theory exists. Interested reader may consult \cite{lang/ant,neukirch/ant,stewart/tall/ant}.

\subsection{Narrow ideal classes.} 

For such a number field $K$, there is a square-free positive integer $d$ so that $K = \QQ(\sqrt{d})$. Since the extension degree is two its Galois group is of order $2$, and for any element $\alpha \in K$, by $\ol{\alpha}$ we denote the image of $\alpha$ under the unique non-trivial element. The ring of integers, $\ZZ_{K}$, of $K = \QQ(\sqrt{d})$ depends on $d$. More precisely, $\ZZ_{K} = 1\cdot \ZZ + \sqrt{d}\cdot \ZZ$ whenever $d \equiv 2,3 \mod 4$ and $\ZZ_{K} = 1\cdot \ZZ + \frac{1+\sqrt{d}}{2}\ZZ$ if $d \equiv 1 \mod 4$. A subset $\aa$ of $K$ is called a \emph{fractional ideal} of $\ZZ_{K}$ (or $K$) if $\aa$ is a 2 dimensional $\ZZ$-module and for which there is an integer $\xi \in \ZZ$ so that $\xi \aa \subset \ZZ_{K}$. Note that the product of two fractional ideals is again a fractional ideal. The norm of a fractional ideal $\aa$, denoted by $N(\aa)$, is defined as $\frac{1}{\xi^{2}}[\ZZ_{K}:\xi\aa]$; where $[\ZZ_{K}:\xi\aa]$ stands for the index of $\xi\aa$ in $\ZZ_{K}$. As $\xi \aa$ is an ideal in a Dedekind domain, there are at most two elements $\alpha,\beta \in \xi \aa$ so that $(\alpha,\beta) = \xi \aa$. In this case, we say that $\aa = (\frac{\alpha}{\xi},\frac{\beta}{\xi})$ and the elements are called the generators of $\aa$. 

For a fractional ideal $\aa$ generated by $\alpha,\beta \in K$, the function $f_{\aa}$ defined on the ideal $\aa$ sending any element $\nu \in \aa$ to $\frac{\nu \ol{\nu}}{N(\aa)}$ is an integral valued binary quadratic form on $\aa$. If we write $\nu = X\alpha + Y \beta$ then we have $f_{\aa}(X,Y) = aX^{2} + bXY + cY^{2}$; where $a = \frac{\alpha \ol{\alpha}}{N(\aa)}$, $b = \frac{\alpha\ol{\beta}+\ol{\alpha}\beta}{N(\aa)}$ and $c = \frac{\beta \ol{\beta}}{N(\aa)}$. One finds that the discriminant of this form, $\Delta(f_{\aa}) := b^{2} - 4ac$, is equal to $4d$ if $d \equiv 2,3 \mod 4$ and is equal to $d$ if $d \equiv 1 \mod 4$, i.e. is equal to the discriminant\footnote{The discriminant of a number field $K$ of degree $n$ is defined as the square of the determinant of the $n\times n$ matrix whose $(i,j)^{\mbox{th}}$ entry is $\sigma_{i}(\alpha_{j})$; where $\{\alpha_{1},\alpha_{2},\ldots,\alpha_{n}\}\subset \ZZ_{K}$ is a basis of $\ZZ_{K}$ and $\{\sigma_{1},\sigma_{2},\ldots,\sigma_n\}$ is the set of distinct embeddings of $K$ into $\CC$.} of $K$. Given a square-free $d$, the discriminant of the corresponding number field is called a \emph{fundamental discriminant}. As a result of the choice of $\alpha$ and $\beta$ $f_{\aa}$ is integral, that is $a,b,c \in \ZZ$ and as $d>0$ the form $f_{\aa}$ is indefinite. The binary quadratic form $f_{\aa}$ is, in addition, \emph{primitive}, that is the greatest common divisor of the coefficients $a,b$ and $c$ is 1. 

To avoid ambiguity caused by the ordering of generators, we say that a basis $(\alpha,\beta)$ of $\aa$ is oriented if $\ol{\alpha}\beta - \alpha \ol{\beta}>0$. Any element of $K$ of positive norm, say $\lambda$, maps an oriented basis to an oriented basis via sending $(\alpha,\beta)$ to $(\lambda\alpha,\lambda\beta)$. We define two fractional ideals $\aa$ and $\bb$ to be equivalent if there is an element $\lambda \in K$ of positive norm so that $\aa = \lambda\bb$. The set of equivalence classes of fractional ideals in $K$ is denoted by $H^{+}(K)$ and is a group under multiplication called the \emph{narrow class group}. An element of $H^{+}(K)$ is denoted by $[\aa] = [\alpha,\beta]$ and is called a \emph{narrow ideal class}. Note that whenever $\ZZ_{K}$ has a unit of norm $-1$, $H^{+}(K)$ turns out to be isomorphic to the classical ideal class group $H(K)$, else it is a degree two extension of the class group. In either case, we say that a narrow ideal class of $[\aa]$ in $H^{+}(K)$ lies above its ideal class in $H(K)$.

We refer to \cite{zagier/zetafunktionen/quadratische/zahlkorper} for details and proofs of the facts above.

\subsection{Automorphisms of narrow ideal classes.} 
\label{sec:automorphisms}
The group $\psl$ acts on the set of indefinite integral primitive binary quadratic forms via change of variable. The $\psl$-orbit of $f$ is denoted by $[f]$. For such a form $f(X,Y) = aX^{2} + bXY + cY^{2}$ its stabilizer is isomorphic to $\ZZ$. We call the equation $X^{2} - \Delta Z^{2} = 4$ the corresponding Pell equation; where $\Delta$ is the discriminant of $f$. The map sending an integral solution $(x,z)$ to the matrix 
$W(x,z) = \begin{pmatrix}
						\frac{x-zb}{2} & -cz\\
						az & \frac{x+zb}{2}
					\end{pmatrix}
$
gives a bijection from the set of integral solution of the corresponding Pell equation and the stabilizer of $f$. By $(x_{o},z_{o})$ we denote the solution which has the smallest positive second component among all solutions. It is called the \emph{fundamental solution}. The matrix $W_{f} = W(x_{o},z_{o})$ is called the \emph{fundamental automorphism} of $f$ which is the generator of the stabilizer of $f$, \cite[Theorem~2.5.5]{bqf/vollmer}. The other generator is $W_{f}^{-1} = W(x_{o},-z_{o})$. A direct result of this is the following:

\begin{corollary}
	If $f$ is an integral primitive indefinite binary quadratic form of discriminant $\Delta$ and $W$ is a automorphism of $f$, then $\tr(W)^{2} - 4 = z^{2}\Delta$ for some $z \in \ZZ$. 
	\label{cor:trace/plus/4}
\end{corollary}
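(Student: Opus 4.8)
The plan is to reduce everything to the explicit bijection between automorphisms of $f$ and integral solutions of the Pell equation that was established just above the statement. Since $W$ is an automorphism of $f$, it lies in the stabilizer of $f$ in $\psl$; by the displayed parametrization, there exists an integral solution $(x,z)$ of the corresponding Pell equation $X^{2} - \Delta Z^{2} = 4$ such that $W = W(x,z)$. This is the only nontrivial input, and it is already provided: the map $(x,z) \mapsto W(x,z)$ is stated to be a bijection onto the stabilizer, so surjectivity lets us realize the given $W$ in this form.

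With $W = W(x,z)$ written out as
$$W(x,z) = \begin{pmatrix} \frac{x-zb}{2} & -cz \\ az & \frac{x+zb}{2} \end{pmatrix},$$
the next step is a one-line trace computation: the off-diagonal entries play no role, and the two diagonal entries sum to $\frac{x-zb}{2} + \frac{x+zb}{2} = x$. Hence $\tr(W) = x$, so the trace of $W$ is precisely the first component of the associated Pell solution.

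Finally I would substitute this back into the defining Pell relation. Because $(x,z)$ satisfies $x^{2} - \Delta z^{2} = 4$, rearranging gives $x^{2} - 4 = z^{2}\Delta$, which upon using $\tr(W) = x$ reads $\tr(W)^{2} - 4 = z^{2}\Delta$, exactly as claimed. I do not anticipate a genuine obstacle here: the statement is essentially a repackaging of the parametrization together with the observation that the trace of $W(x,z)$ recovers the Pell variable $x$. The only point requiring a moment of care is invoking surjectivity of the parametrization in the correct direction, namely that \emph{every} automorphism of $f$, and not merely the fundamental one and its powers as they were described, is captured by some integral solution $(x,z)$; this is guaranteed by the bijectivity asserted in the preceding paragraph.
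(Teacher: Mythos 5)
Your proposal is correct and is precisely the argument the paper intends: the corollary is stated there as a ``direct result'' of the bijection $(x,z)\mapsto W(x,z)$ between integral solutions of $X^{2}-\Delta Z^{2}=4$ and the stabilizer of $f$, combined with the observation that $\tr(W(x,z))=x$. Your write-up simply makes this implicit one-line deduction explicit, including the correct appeal to surjectivity of the parametrization, so there is nothing to add.
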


If $f = f_{\aa}$ for some narrow ideal class $\aa$ of $K$, then an element $W$ of the stabilizer $\langle W_{f} \rangle$ is called an \emph{automorphism of} $\aa$ and the matrix $W(x_{o},z_{o})$ (with $z_{o}>0$) is called the \emph{fundamental automorphism} of $\aa$. Remark that the fundamental automorphism of all narrow ideal classes of $K$ arise from the same solution, hence the fundamental solution $(x_{o},z_{o})$ is an invariant of the number field. 

The two matrices 
$S = \begin{pmatrix}
     	0&-1\\1&0
     \end{pmatrix}
$
and 
$L = \begin{pmatrix}
     	1&-1\\1&0
     \end{pmatrix}
$
generate $\psl$ freely and therefore induce the isomorphism $\psl \cong \ZZ/2\ZZ \ast \ZZ/3\ZZ$. In particular, $W_{f}$ can be written as a word in $S, L$ and $L^{2}$. Without loss of generality, we may assume that $W_{f}$ has no cancellations. 

The action of $\psl$ on the set of narrow ideal classes of $K$ is defined as 
$\gamma \cdot (\alpha,\beta) \mapsto (p\alpha + q\beta, r\alpha+s\beta)$; where 
$\gamma = \begin{pmatrix}
				p&q\\r&s
			\end{pmatrix}$.
Note that $f_{\gamma\cdot\aa} = \gamma\cdot f_{\aa}$. The correspondence defined as $[\aa] \mapsto [f_{\aa}]$ is one-to-one, see \cite[\S10, Satz]{zagier/zetafunktionen/quadratische/zahlkorper}. This correspondence is far from being onto as there are many primitive forms of non-square-free discriminant. For instance, if $f = (a,b,c)$ is an indefinite binary quadratic form of arising from $\QQ(\sqrt{d})$ with $d$ being square-free (hence its discriminant $\Delta$ is either $d$ or $4d$ depending on the class of $d \in \ZZ/4\ZZ$) then for any prime number $p>2$ not dividing $a$, the form $(a,bp,cp^{2})$ is a primitive form of discriminant $p^{2}\Delta$.

One can also prove that the correspondence $[\aa] \mapsto [W_{f_{\aa}}]$; where $[W_{f_{\aa}}]$ stands for the conjugacy class of the stabilizer of $f_{\aa}$ is one-to-one, see \cite[Proposition~2.1]{UZD}. As above, this correspondence is not surjective even when one restricts to primitive elements(i.e. elements which are not powers of other elements). 

\subsection{\c{C}arks.}
The modular group $\psl$ acts on the upper half plane $\hh = \{z \in \CC \colon \im(z)>0\}$. An element $\gamma = \begin{pmatrix}
				p&q\\r&s
			\end{pmatrix}$
sends an element $z\in \hh$ to $\frac{pz+q}{rz+s}\in \hh$. Fixed points of the matrices $S$ and $L$ are $\sqrt{-1}$ and $\zeta_{3} = e^{2 \pi \sqrt{-1}/3}$, respectively. We mark $\sqrt{-1}$ by a $\circ$ and $\zeta_{3}$ by $\bullet$. These points are on the unit circle centered at $0 \in \CC$. The $\psl$ orbit (in $\hh$) of the part of the circle between $\circ$ and $\bullet$ is defined as the Farey tree which will be denoted by $\FF$, see Figure~\ref{fig:Farey/tree}. 

\begin{figure}[h!]
	\centering
	\includegraphics[scale=1]{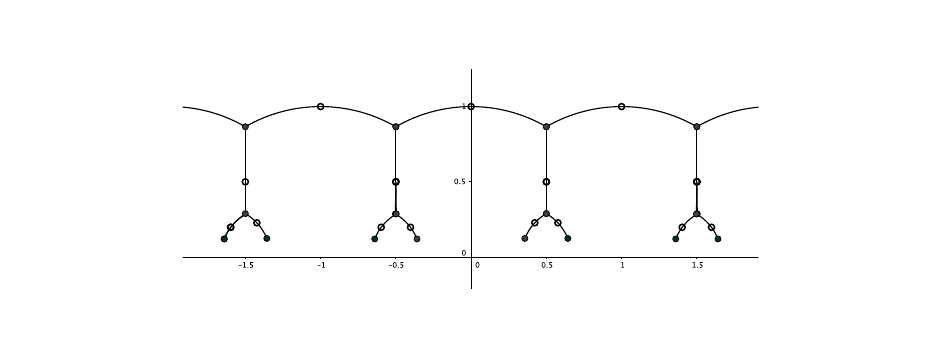}
	\caption{The Farey tree, $\FF$.}
	\label{fig:Farey/tree}
\end{figure}

The Farey tree is by construction bipartite and planar. Moreover, it admits a free action of $\psl$ in such a fashion that edges of the Farey tree can be identified with elements of $\psl$. A similar correspondence holds between vertices of type $\circ$ (resp. $\bullet$) and cosets of the torsion subgroup $\{I,S\}$ (resp. $\{I,L,L^{2}\}$). Hence, for any subgroup $\Gamma$ of $\psl$, one may talk about the quotient graph, $\Gamma \backslash \FF$, which is again bipartite but not necessarily planar as a ribbon graph. In such a graph, every vertex of type $\bullet$ is of order $1$ or $3$ and every vertex of type $\circ$ is of order $1$ or $2$. In particular, the full quotient, $\psl \backslash \hh$ is called the modular orbifold. Let us remark that the covering category consisting of \'{e}tale covers of the modular orbifold is so rich that the whole absolute Galois group can be recovered from it, see \cite{panorama}. The quotient $\psl \backslash \FF$ has only two vertices, one is $\circ$ and the other is $\bullet$ with a single edge joining the two. 

The conjugation action of $\psl$ on its subgroups is equivalent to the translation action of $\psl$ on the set of edges of the corresponding graph, see \cite[Theorem~2.2]{UZD}. Let us make the following:

\begin{definition}
	Let $\aa$ be a narrow ideal class in $K$. Then the graph $\langle W_{f_{\aa}}\rangle \backslash \FF$ is called the \emph{\c{c}ark} corresponding to $\aa$. This graph is denoted by $\cc_{\aa}$.
\end{definition}

Similar to other correspondences stated, this is again one to one but far from being surjective, as there are \c{c}arks which come from binary quadratic forms of non-square-free discriminant. Nevertheless, \c{c}arks that come from a narrow ideal class inherit all invariants of ideal classes and corresponding binary quadratic forms, e.g. discriminants, traces, etc. The graph $\cc_{\aa}$ is planar, can be embedded in an annulus conformally (\cite[\S3.2]{UZD}) and has a unique cycle called \emph{spine}. The number of vertices on the spine is finite and the number of vertices of type $\circ$ on the spine  is equal to the number of vertices of type $\bullet$. The graph $\cc_{\aa}$ is then formed by attaching Farey trees to all vertices of type $\bullet$ on the spine so that they should expand both \emph{inside} and \emph{outside} the spine. Each so attached Farey tree is called a Farey branch. The number of consecutive Farey branches that point in the same direction is called a Farey bunch. The graph $\cc_{\aa}$ and hence the conjugacy class of $W_{f_{\aa}}$ is completely determined by the formation of these Farey bunches and the number of Farey branches within these bunches. 

\begin{example}
	For $K = \QQ(\sqrt{30})$, we set $\aa = (2,\sqrt{30})$. $N(\aa) = 2$. This gives rise to the indefinite binary quadratic form $f_{\aa}(X,Y) = 2X^{2} -15Y^{2}$. We have $W_{f_{\aa}} = \begin{pmatrix}
	11&30\\4&11
	\end{pmatrix}$ and in terms of the generators one has $W_{f_{\aa}} = (LS)^{2}L^{2}S (LS)^{2} L^{2}S (LS)^{2}$. Figure~\ref{fig:cark/example} depicts the corresponding \c{c}ark.
	\label{ex:sqrt/30}
\end{example}

\begin{figure}
	\centering
	\includegraphics[scale=0.7]{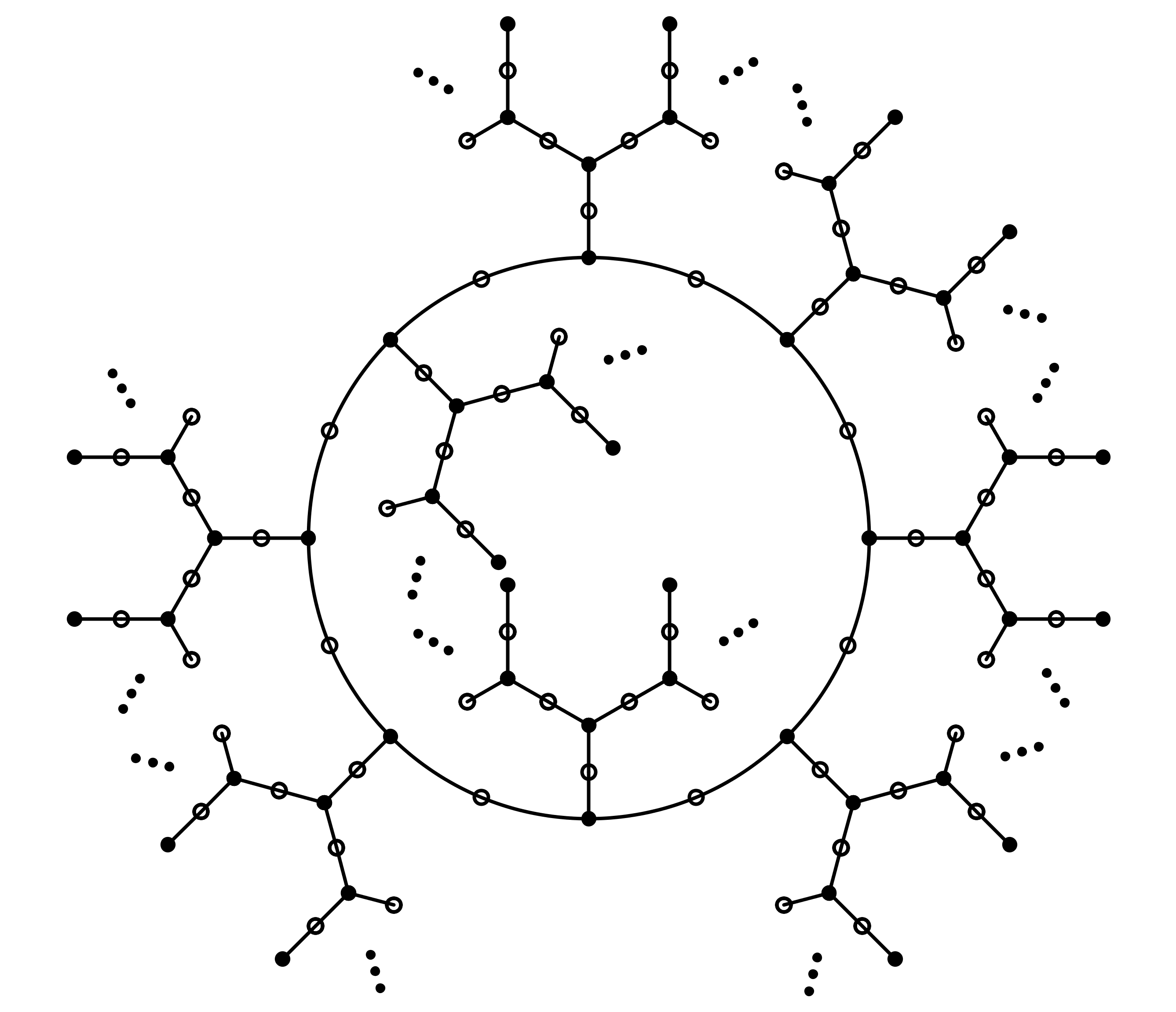}
	\caption{The \c{c}ark representing $\aa = (2,\sqrt{30})$.}
	\label{fig:cark/example}
\end{figure}

\subsection{\c{C}arks as integral points.}

Let $\aa$ be a narrow ideal class in $K$ and $W_{f_{\aa}} \in \psl$ be its fundamental automorphism. Elements in the conjugacy class $[W_{f_{\aa}}]$ can be partially ordered according to their lengths (i.e. number of letters $S,L$ and $L^{2}$ that appear). Under the correspondence between the edges of the \c{c}ark $\cc_{\aa}$ and $[W_{f_{\aa}}]$ one may observe that those that are of smallest length (called \emph{minimal words}) correspond exactly to edges on the spine. Minimal words are not unique because if $W$ is such a word then so is $SWS$. In fact, minimal words can be written as a disjoint union of those that start with $S$ and those that start either with $L$ or with $L^{2}$. Among the latter there are those that can be written of the form $(LS)^{m_{1}}(L^{2}S)^{n_{1}} \ldots (LS)^{m_{k}}(L^{2}S)^{n_{k}}$; where $m_{1}, n_{1},\ldots,m_{k},n_{k}\geq 1$. To each such element in the conjugacy class, we associate the ordered pair $(m_{1},n_{1},\ldots,m_{k},n_{k})$ and call $k$ the length of the \c{c}ark. Observe that the integers $m_{i}$ represent the number of Farey trees in consecutive Farey bunches that expand in the direction of the outer boundary. Analogously $n_{i}$ stand for the number of Farey tree in the Farey bunches that expand in the direction of the inner boundary.

A couple of remarks are in order. If the conjugacy class of a word $W$ gives rise to the sequence $(m_{1},n_{1},\ldots,m_{k},n_{k})$, then $W^{l}$ gives rise to the same sequence repeated $l$-times, in particular it is represented by a $2kl$-tuple. Let us define a \c{c}ark to be \emph{primitive} if it is not a repetition of a shorter \c{c}ark. Therefore, although $W$ and $W^{l}$ give rise to the same binary quadratic form their \c{c}arks are different. Secondly, the number $k_{\aa} := k$ is fixed for a narrow ideal class $\aa$ and the length of the minimal word is equal to $2\sum_{i=1}^{k}(m_{i}+n_{i})$. However, different narrow ideal classes of the same number field $K$ may be represented by minimal words of different lengths, e.g. for $K = \QQ(\sqrt{30})$ $H^{+}(K) \cong (\ZZ/2\ZZ)^{2}$, with two narrow ideal classes that lies above the class of the principal ideal being represented by a 2-tuple, and as we have seen earlier(Example~\ref{ex:sqrt/30}), the two narrow ideal classes lying above $[(2,\sqrt{30})]$ being represented by a 4-tuple. We are now ready to prove our main theorem:

\begin{theorem}
	Let $K$ be a real quadratic number field of discriminant $\Delta$. Then each narrow ideal class in $K$ gives rise to an integral solution of the equation 
	\begin{eqnarray}
		\mathcal{L}_{2k}(x_{1},y_{1},\ldots,x_{k},y_{k})^{2} - 4 = z^{2}\Delta;
		\label{eq:cark/hypersurface}
	\end{eqnarray}
	for some positive integer $k$ and for some $z \in \ZZ$ which depends only on $K$.
	\label{thm:affine/equation}
\end{theorem}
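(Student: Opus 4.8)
The plan is to identify the trace of the fundamental automorphism $W_{f_{\aa}}$ of a narrow ideal class with a value of the multivariate Lucas polynomial, and then to feed that identification into Corollary~\ref{cor:trace/plus/4}. Given a narrow ideal class $\aa$, I would first pass to its \c{c}ark $\cc_{\aa}$ and select, within the conjugacy class $[W_{f_{\aa}}]$, a minimal-word representative of the strictly alternating form $(LS)^{m_{1}}(L^{2}S)^{n_{1}}\cdots(LS)^{m_{k}}(L^{2}S)^{n_{k}}$ with all $m_{i},n_{i}\geq 1$ and $k=k_{\aa}$, exactly as set up in the subsection on \c{c}arks as integral points. Since the trace is a class function, it is enough to compute the trace of this particular representative.

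The crux is an elementary matrix identity linking the generators to the matrix $M$. A direct computation in $\sl$ gives
$$LS = -\begin{pmatrix}1&1\\0&1\end{pmatrix}, \qquad L^{2}S = -\begin{pmatrix}1&0\\1&1\end{pmatrix},$$
whence $(LS)^{m} = (-1)^{m}\begin{pmatrix}1&m\\0&1\end{pmatrix}$ and $(L^{2}S)^{n} = (-1)^{n}\begin{pmatrix}1&0\\n&1\end{pmatrix}$. On the other hand the defining matrix factors as
$$M(x,y) = \begin{pmatrix}1&x\\0&1\end{pmatrix}\begin{pmatrix}1&0\\y&1\end{pmatrix},$$
so $M(m_{i},n_{i}) = (-1)^{m_{i}+n_{i}}(LS)^{m_{i}}(L^{2}S)^{n_{i}}$. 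Multiplying the $k$ blocks shows that $M(m_{1},n_{1},\ldots,m_{k},n_{k})$ equals the chosen representative of $W_{f_{\aa}}$ up to an overall sign.

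Next I would read the trace off the Pauli decomposition. Since $\sigma_{1},\sigma_{2},\sigma_{3}$ are traceless while $\tr(I)=2$, the relation $M = A_{k}I + B_{k}\sigma_{1} + C_{k}\sigma_{2} + D_{k}\sigma_{3}$ yields
$$\tr M(x_{1},y_{1},\ldots,x_{k},y_{k}) = 2A_{k} = \mathcal{L}_{2k}(x_{1},y_{1},\ldots,x_{k},y_{k}).$$
Evaluating at $(m_{1},n_{1},\ldots,m_{k},n_{k})$ and combining with the previous paragraph gives $\tr(W_{f_{\aa}}) = \pm\,\mathcal{L}_{2k}(m_{1},n_{1},\ldots,m_{k},n_{k})$, the sign being irrelevant below. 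Corollary~\ref{cor:trace/plus/4}, applied to the automorphism $W_{f_{\aa}}$ of $f_{\aa}$, then gives $\tr(W_{f_{\aa}})^{2} - 4 = z^{2}\Delta$; squaring annihilates the sign, so the positive-integer tuple $(x_{1},y_{1},\ldots,x_{k},y_{k}) = (m_{1},n_{1},\ldots,m_{k},n_{k})$ solves \eqref{eq:cark/hypersurface}. Because all narrow ideal classes of $K$ share the fundamental solution $(x_{o},z_{o})$ (so that $\tr(W_{f_{\aa}})=x_{o}$ in every case), one takes $z=z_{o}$, which depends only on $K$.

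The only genuine obstacle is the very first step: establishing that $[W_{f_{\aa}}]$ really contains a representative of the strictly alternating shape $(LS)^{m_{1}}(L^{2}S)^{n_{1}}\cdots$ with every exponent positive. This is exactly the structural content of the \c{c}ark/spine description — the alternation of inward- and outward-pointing Farey bunches along the spine forces alternating $(LS)$- and $(L^{2}S)$-blocks — so I would invoke that result rather than reprove it. Once it is granted, everything else reduces to the two routine computations above: the generator identity and the trace of the Pauli-decomposed product.
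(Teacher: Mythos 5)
Your proposal is correct and follows essentially the same route as the paper's own proof: represent the fundamental automorphism by an alternating word $(LS)^{m_{1}}(L^{2}S)^{n_{1}}\cdots(LS)^{m_{k}}(L^{2}S)^{n_{k}}$, identify $M(m,n)$ with $(LS)^{m}(L^{2}S)^{n}$, read off $\tr M = 2A_{k} = \mathcal{L}_{2k}$ from the Pauli decomposition, and conclude via the automorphism--Pell-solution correspondence with the fundamental solution $(x_{o},z_{o})$ supplying the $K$-dependent $z$. Your treatment is in fact slightly more careful than the paper's on two minor points --- you track the sign $(-1)^{m_{i}+n_{i}}$ arising from lifting $\psl$ to $\sl$ (which the paper suppresses, harmlessly, since squaring kills it), and you work with $k=k_{\aa}$ per class rather than padding with zeroes to the uniform $k_{K}$ --- but these are refinements of the same argument, not a different one.
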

\begin{proof}
	We let $k = k_{K}$ to be the maximum of $k_{\aa}$ as $\aa$ runs through $H^{+}(K)$ and let $\Delta$ be the discriminant of $K$. Set $(x_{o},z_{o})$ to be the fundamental solution of the corresponding Pell equation $X^{2} - \Delta Z^{2} = 4$. Remark that for any narrow ideal class, the fundamental automorphism will be obtained using $(x_{o},z_{o})$. For each \c{c}ark represented by $2l$-tuple, say $(m_{\aa,1},n_{\aa,1}, \ldots m_{\aa,l},n_{\aa,l})$, for $l<k$ we complete it to a $2k$-tuple by appending $2(k-l)$-many zeroes to the end of the tuple and obtain $(m_{\aa,1},n_{\aa,1}, \ldots m_{\aa,l},n_{\aa,l},0,\ldots,0)$. Given such a $2k$-tuple, say $(m_{1},n_{1},\ldots,m_{k}n_{k})$ we set:
	$$W = (LS)^{m_{1}}(L^{2}S)^{n_{1}}\ldots (LS)^{m_{k}}(L^{2}S)^{n_{k}}.$$
	By construction the matrix $W \in \psl$ is a fundamental automorphism of the binary quadratic form $f_{\aa}$. Using the correspondence between automorphisms and solutions of the Pell equation $X^{2}+\Delta Z^{2} = 4$, see Section~\ref{sec:automorphisms}, we obtain $\tr(W) = x = x_{o}$ and satisfies $x_{o}^{2} - 4 = \Delta z_{o}^{2}$.
	
	Now, we observe that $M(m,n) = (LS)^{m}(L^{2}S)^{n}$. The multivariate Lucas polynomial $\mathcal{L}_{2k} = 2A_{k}$ is merely the trace of the matrix $M(x_{1},y_{1},\ldots,x_{k},y_{k})$ hence trace of $W$ is equal to $\mathcal{L}_{2k}(m_{1},n_{1},\ldots,m_{k},n_{k})$.
\end{proof}

\begin{definition}
	We let $C_{K}\subset \CC^{2k_{K}}$ denote the solution set of the equation 
	\begin{eqnarray*}
		\mathcal{L}_{2k_{K}}(x_{1},y_{1},\ldots,x_{k_{K}},y_{k_{K}})^{2} - 4 = z^{2}\Delta;
	\end{eqnarray*}
	refer to it as the \emph{affine \c{c}ark hypersurface}.
\end{definition}

Recall that there is an action of $\ZZ/k\ZZ$ on $\mathcal{L}_{2k}$. This means that the set $C_{K}$ admits an action of $\ZZ/k_{K}\ZZ$, see Equation~\ref{eq:lucas/invariance}. 

\begin{definition}
	The \emph{affine \c{c}ark surface} of $K$ is defined as the quotient $C_{K}/(\ZZ/k_{K}\ZZ)$. This surface will be denoted by $\mathcal{C}_{K}$. 
\end{definition}

\begin{corollary}
	Let $K$ be a real quadratic number field. Then there is a one to one correspondence between integral points of the \c{c}ark surface $\mathcal{C}_{K}$ and narrow ideal classes in $K$.
	\label{thm:quotient}
\end{corollary}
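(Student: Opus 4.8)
The plan is to show that the assignment furnished by Theorem~\ref{thm:affine/equation} descends, after passing to the quotient by $\ZZ/k_{K}\ZZ$, to a bijection between narrow ideal classes and integral points of $\mathcal{C}_{K}$. First I would make the forward map explicit. A narrow ideal class $\aa$ determines its fundamental automorphism $W_{f_{\aa}}$, and hence the conjugacy class $[W_{f_{\aa}}]$, which by the correspondence recalled in Section~\ref{sec:automorphisms} is a faithful invariant of $\aa$. Reading off the minimal words of the form $(LS)^{m_{1}}(L^{2}S)^{n_{1}}\cdots(LS)^{m_{k_{\aa}}}(L^{2}S)^{n_{k_{\aa}}}$, all of whose exponents are $\geq 1$, produces the tuple $(m_{1},n_{1},\ldots,m_{k_{\aa}},n_{k_{\aa}})$, and appending $2(k_{K}-k_{\aa})$ zeroes yields a point of $C_{K}$. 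That this point satisfies the defining equation, with $z$ equal to the fixed fundamental solution $z_{o}$, is precisely the content of Theorem~\ref{thm:affine/equation}, while Lemma~\ref{thm:zeroes/in/cark/equation} guarantees that the padding leaves the value of $\mathcal{L}_{2k_{K}}$ unchanged. Composing with the quotient $C_{K}\to\mathcal{C}_{K}$ gives the candidate map.

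Next I would verify well-definedness and injectivity. The only freedom in the chosen representative is cyclic rotation of the syllable blocks $(LS)^{m_{i}}(L^{2}S)^{n_{i}}$, since within a conjugacy class the minimal words of this shape differ precisely by cyclic conjugation; this rotation is exactly the generator of the $\ZZ/k_{K}\ZZ$-action, so by Equation~\ref{eq:lucas/invariance} the image in $\mathcal{C}_{K}$ is independent of all choices. For injectivity, the number of nonzero blocks equals $k_{\aa}$ and is invariant under rotation, and the nonzero blocks remain consecutive in the cyclic word; hence from a single $\ZZ/k_{K}\ZZ$-orbit one recovers $k_{\aa}$ together with the cyclic tuple up to rotation, equivalently the \c{c}ark $\cc_{\aa}$ and the conjugacy class $[W_{f_{\aa}}]$, which by Section~\ref{sec:automorphisms} determines $\aa$ uniquely. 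Primitivity of the \c{c}ark makes the $\ZZ/k_{\aa}\ZZ$-action on the tuple free, so that $k_{\aa}$ is genuinely well defined and the counting above is valid.

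Finally, surjectivity is where I expect the main difficulty. Given an integral point I would form $W=\prod_{i}M(m_{i},n_{i})=\prod_{i}(LS)^{m_{i}}(L^{2}S)^{n_{i}}\in\psl$, whose trace is $\mathcal{L}_{2k_{K}}(m_{1},n_{1},\ldots,m_{k_{K}},n_{k_{K}})$; the defining equation together with $z=z_{o}$ forces $\tr(W)=\pm x_{o}$, and since $x_{o}^{2}-4=z_{o}^{2}\Delta>0$ the element $W$ is hyperbolic. By Corollary~\ref{cor:trace/plus/4} and the minimality of the fundamental solution $(x_{o},z_{o})$, no proper power of a shorter hyperbolic element can have trace $x_{o}$, so $W$ is itself a fundamental automorphism of a primitive indefinite binary quadratic form of discriminant exactly $\Delta$. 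Because $\Delta$ is the fundamental discriminant of $K$, the classical Gauss correspondence identifies this form with $f_{\aa}$ for a genuine narrow ideal class $\aa$; the spurious forms of non-square-free discriminant $p^{2}\Delta$ do not intervene precisely because $\Delta$ and $z_{o}$ are fixed by the equation. This $\aa$ then maps back to the given orbit.

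The hard part, and the step on which the whole correspondence rests, is the bookkeeping that pins down the correct admissible set of integral points. One must ensure that every integral point under consideration has syllable structure with each block exponent positive (the zeroes occurring only as a terminal, rotation-preserved string), so that $W$ is a reduced, cancellation-free minimal word and the length $k$ can be read off consistently. Without this positivity a solution of the hypersurface with some negative coordinate could, via the same $W$, land on a narrow ideal class whose canonical padded tuple is a different orbit, breaking injectivity. I therefore expect the essential content of the proof to be the verification that the integral points of $\mathcal{C}_{K}$ are exactly the orbits of such admissible tuples, for which the combinatorics of minimal words on the spine of $\cc_{\aa}$ — consecutiveness of nonzero blocks and freeness of the cyclic action — is the decisive input.
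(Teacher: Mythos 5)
Your proposal is correct and takes essentially the same route as the paper's own proof: the forward map is obtained from Theorem~\ref{thm:affine/equation} together with the observation that the $\ZZ/k_{K}\ZZ$-action (cyclic rotation) does not change the conjugacy class of the fundamental automorphism, and the converse direction assembles $W=(LS)^{m_{1}}(L^{2}S)^{n_{1}}\cdots(LS)^{m_{k}}(L^{2}S)^{n_{k}}$ from an integral point, passes to the associated primitive binary quadratic form, and uses the minimality of $z_{o}$ in the defining equation to pin the discriminant down to the fundamental one. Your additional bookkeeping --- the freeness of the cyclic action, consecutiveness of nonzero blocks, and the positivity/admissibility of the coordinates --- is in fact more careful than the paper's two-paragraph argument, which passes over these points silently.
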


\begin{proof}
	One way of this correspondence can be obtained by using Theorem~\ref{thm:affine/equation} and noting the fact that the action of $\ZZ/k_{K}\ZZ$ does not change the conjugacy class of the fundamental automorphism of narrow ideal classes. 
	
	Conversely, if we start with an integral point on $\mathcal{C}_{K}$, say $(m_{1},n_{1},\ldots,m_{k_{K}},m_{k_{K}})$, one can construct the element 
	$$W = (LS)^{m_{1}}(L^{2}S)^{n_{1}}\ldots (LS)^{m_{k}}(L^{2}S)^{n_{k}}$$
	and look at the narrow ideal class that arises from the binary quadratic form whose fundamental automorphism is $W$, namely if 
	$W = \begin{pmatrix}
				p&q\\r&s
	     \end{pmatrix}
	$ then the corresponding indefinite primitive binary quadratic form is $f(X,Y) = \frac{1}{\delta} \left(rX^{2} +(s-p)XY - qY^{2}\right)$; where $\delta$ is the greatest common divisor of $r$, $s-p$ and $q$. Since this point is a solution of Equation~\ref{eq:cark/hypersurface}; where by definition $z_{o}$ is minimal, the discriminant of this form must be square-free.
\end{proof}

Let us conclude the paper with a few remarks. Instead of considering the \emph{fundamental solution} we may consider other $z$ arising from non-fundamental solutions of the corresponding Pell equation. Even more generally, we may treat $z$ as variable in the equation and consider the affine hypersurface with equation
$$\mathcal{L}_{2k}(x_{1},y_{1},\ldots,x_{k},y_{k})^{2} - 4 = z^{2}\Delta$$ in $\CC^{2k+1}$. Each integral point on the quotient of this hypersurface with the obvious action of $\ZZ/k\ZZ$ gives rise to a binary quadratic form whose discriminant's square-free part is equal to $\Delta$. Such integral points gives rise to non-maximal orders in $\ZZ_{K}$. One may then intersect the hypersurface with $z = \lambda$ planes, where $\lambda \in \ZZ$, and then consider integral points of the intersection. In this case, again each integral point gives rise to a narrow ideal class in an appropriate class group, see \cite[Theorem~5.2.9]{computational/nt/cohen}. One may generalize Corollary~\ref{thm:quotient} immediately to this case. Indeed, assuming one can compute the class number for $K$, or almost equivalently find the number of integral points on the \c{c}ark surface of $K$, one can determine the number of integral points of this surface, see \cite[Corollary~7.28]{cox/primes/of/the/form}. 



\paragraph{Acknowledgments.} This research is supported by T\"{U}B\.{I}TAK 1001 Grant 114R073.

\bibliographystyle{abbrv}

\end{document}